\newtheorem{theorem}{Theorem}[section]
\newtheorem{problem}[theorem]{Problem}
\newtheorem{proposition}[theorem]{Proposition}
\newenvironment{proof}[1][Proof]{\noindent\textbf{#1.} }
{\hfill \ \rule{0.5em}{0.5em}}
\newcommand{\ff}[1]{{\mathbb F}_{#1}}
\newcommand{\ffs}[1]{{\mathbb F}_{#1}^\star}
\newcommand{\ffx}[1]{\ff{#1}[X]}
\DeclareMathOperator{\Tr}{Tr}
\DeclareMathOperator{\Nm}{N}
\begin{document}

\title{Planar polynomials and an extremal problem of Fischer and Matou\u{s}ek}

\author{Robert S. Coulter\thanks{Department of Mathematical Sciences, University of Delaware, Newark, DE, 19716, USA.}
\and
Rex W. Matthews\thanks{6 Earl St., Sandy Bay, Tasmania 7005, Australia.}
\and
Craig Timmons\thanks{Department of Mathematics and Statistics, California State University Sacramento, USA. \texttt{craig.timmons@csus.edu}.
This author was was supported by the Simons Foundation (Grant \#359419)}
}

\maketitle

\abstract{
Let $G$ be a 3-partite graph with $k$ vertices in each part
and suppose that between any two parts, there is no 
cycle of length four.  Fischer and Matou\u{s}ek asked for the maximum number of triangles in such a graph.  
A simple construction involving arbitrary projective planes shows that there is such a graph with 
$(1 - o(1)) k^{3/2} $ triangles, and a double counting argument shows that one  
cannot have more than $(1+o(1))  k^{7/4} $ triangles.  
Using affine planes defined by specific planar polynomials over finite
fields, we improve the lower bound to $(1 - o(1)) k^{5/3}$.
}


\section{Introduction}

Let $n$ and $k$ be positive integers and write $[n]$ for $\{1,2, \dots , n \}$.    
If $\mathcal{F}$ is a family of functions from $[n]$ to $[2]$, then a 
set $A \subseteq [n]$ is called \emph{shattered} if given any function 
$g : A \rightarrow \{1, 2 \}$, there exists a function $f \in \mathcal{F}$ such that 
$f(a) = g(a)$ for all $a \in A$. 
The well-studied \emph{Vapnik-Chervonenkis dimension} 
or \emph{VC-dimension} of  $\mathcal{F}$ is the 
maximum size of a subset $A \subseteq X$ 
that is shattered by $\mathcal{F}$.  
A generalization of VC-dimension is 
the so-called Natarajan dimension.  
Let $\mathcal{F}$ be a collection of functions from $[n]$ to $[k]$.  
Given a set $A \subseteq [n]$, we say that $A$ is \emph{2-shattered} if 
for each $x \in A$, there is a pair $V_x \subseteq [k]$ such that 
for any choice of elements $c_x \in V_x$, there is an $f \in \mathcal{F}$ 
such that $f(x) = c_x$ for all $x \in A$.  
The family $\mathcal{F}$ has \emph{Natarajan dimension at most} $d$ if there is no 
subset $A \subseteq X$ with $d + 1$ elements that is 2-shattered by $\mathcal{F}$.

A natural question is given $n$ and $d$, how many functions can belong to $\mathcal{F}$ if the VC-dimension
of $\mathcal{F}$ is at most $d$?  Similarly, given $n$, $k$, and $d$, one can ask how large 
$\mathcal{F}$ can be if the Natarajan dimension of $\mathcal{F}$ is at most $d$.  
Fischer and Matou\u{s}ek \cite{fm} reformulated this problem as an interesting problem in extremal graph theory.
Given a collection of functions $\mathcal{F}$ from $[n]$ to $[k]$, we can view $\mathcal{F}$ as defining a
$n$-uniform $n$-partite hypergraph where each part has $k$ vertices.  The vertex set of 
this hypergraph is $[n] \times [k]$
and the edges are all sets of the form 
\[
\{ (1 , f(1) ) , ( 2 , f(2) ) , \dots , ( n , f(n) ) \},
\]
where $f \in \mathcal{F}$.  
A set $A \subseteq [n]$ is 2-shattered
if the subhypergraph of $\mathcal{F}$ induced by $A \times [k]$ 
contains  
a complete $|A|$-uniform, $|A|$-partite hypergraph with two vertices in each part.
For more on VC-dimension, Natarajan dimension, and its connection to hypergraphs, 
we refer the reader to \cite{fm} and the references therein.    

Fischer and Matou\u{s}ek showed that 
there is a family of functions from $[n]$ to $[3]$ with $3n$ elements and Natarajan dimension 1.  
Additionally, the $3n$ is best possible.  
This gives a solution to a 
special case of the above mentioned problem, but 
many cases remain open.  One of particular interest, mentioned explicitly in 
 \cite{fm}, is when $n = 3$, $d = 1$, and $k \geq 3$ is arbitrary.  The corresponding 
 extremal graph theory problem is as follows.  
 
 \begin{problem}\label{problem}
Let $G$ be a 3-partite graph with $k$ vertices in each part and 
suppose 
 that the bipartite graph between any two parts does not contain a 
 cycle of length four.  Determine how 
 many triangles can appear in such a graph.
 \end{problem}
 
While Problem \ref{problem} arose in the context of Natarajan dimension, 
given the recent activity on counting copies 
of a fixed graph $H$ in an $F$-free graph with $n$ vertices \cite{as, bg, grzesik, gl, hatami}, 
it is an interesting extremal problem in its own right.  
Let 
\[
\triangle (k)
\]
be the maximum number of triangles in a 3-partite graph with $k$ vertices in each 
part such that between any two parts, there is no cycle of length four.    
To our knowledge, the best known bounds on $ \triangle (k)$ are given in the next proposition.  

\begin{proposition}[Fischer, Matou\u{s}ek \cite{fm}]\label{fm prop}
The function $\triangle (k)$ satisfies 
\[
k^{3/2} - o ( k^{3/2} ) \leq \triangle (k) \leq k^{7/4} + O(k^{3/2} )
\]
as $k \rightarrow \infty$.  
\end{proposition}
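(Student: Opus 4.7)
My plan is to prove the two inequalities separately. For the lower bound, I would build the 3-partite graph from a projective plane. Fix a prime power $q$, let $\pi$ be a projective plane of order $q$, with points $P$ and lines $L$, each of size $q^2 + q + 1$, and take three parts $A, B, C$ with $A = B = P$ and $C = L$, so each part has $k = q^2 + q + 1$ vertices. I put a perfect matching between $A$ and $B$ identifying each point with its copy, and let the $A$--$C$ and $B$--$C$ edges be the point-line incidences of $\pi$. The matching is trivially $C_4$-free, and the incidence graph of $\pi$ is $C_4$-free since any two lines meet in at most one point (and dually). Triangles of this graph are in bijection with the flags of $\pi$, giving $(q^2 + q + 1)(q + 1) \sim q^3 \sim k^{3/2}$ triangles. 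A standard density-of-prime-powers argument then handles values of $k$ away from $q^2 + q + 1$ to produce $(1 - o(1))k^{3/2}$ for all $k$.

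For the upper bound, I would count triangles through the middle vertex and apply Cauchy--Schwarz. Let $G$ be a 3-partite graph with parts $A, B, C$ of size $k$, all pairwise bipartite subgraphs $C_4$-free, and write $T$ for the number of triangles. For every edge $ac$ of the bipartite graph between $A$ and $C$, set $t(a,c) = |N_B(a) \cap N_B(c)|$, so that $T = \sum_{ac \in E(A,C)} t(a,c)$. The structural input is the inequality
\[
\sum_{ac \in E(A,C)} \binom{t(a,c)}{2} \leq \binom{k}{2},
\]
which holds because each unordered pair $\{b_1, b_2\} \subseteq B$ has at most one common neighbour in $A$ (by $C_4$-freeness across $A$--$B$) and at most one in $C$, so it contributes at most one triple to the sum. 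Rearranging yields $\sum t(a,c)^2 \leq T + k^2 - k$.

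Combining this with the K\H{o}v\'ari--S\'os--Tur\'an bound $e(A,C) \leq k^{3/2} + O(k)$ and the Cauchy--Schwarz estimate $T^2 \leq e(A,C)\sum t(a,c)^2$ gives the quadratic
\[
T^2 \leq e(A,C)\bigl(T + k^2 - k\bigr).
\]
Solving for $T$, the term $e(A,C)^2 = O(k^3)$ inside the square root is dominated by $4e(A,C)(k^2 - k) \sim 4k^{7/2}$, and expanding to leading order yields $T \leq k^{7/4} + O(k^{3/2})$. I expect the main technical nuisance to be this last step: tracking constants so that no multiplicative factor appears in front of $k^{7/4}$. Everything else is a clean application of double counting plus $C_4$-freeness, so the only real care is in the square-root expansion at the end.
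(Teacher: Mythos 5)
Your proposal is correct. For the lower bound you use the same underlying idea as the paper (a projective plane of order $q$ supplies the $C_4$-free bipartite graphs and the $(q^2+q+1)(q+1)$ triangles), though arranged slightly differently: the paper puts the point--line incidence graph between $A$ and $B$ and attaches a single vertex of $C$ to everything, so triangles correspond to incidences, whereas you put a perfect matching between $A$ and $B$ and use the incidence relation twice, so triangles correspond to flags; both arrangements are $C_4$-free between every pair of parts and give the same count. For the upper bound the paper offers no proof and simply cites Fischer and Matou\u{s}ek, but your argument is complete and correct: the pair-counting inequality $\sum_{ac}\binom{t(a,c)}{2}\le\binom{k}{2}$ follows exactly as you say from $C_4$-freeness of the $A$--$B$ and $B$--$C$ graphs, and combining it with K\H{o}v\'ari--S\'os--Tur\'an on $e(A,C)$ and Cauchy--Schwarz yields $T\le\frac{1}{2}\bigl(e+\sqrt{e^2+4e(k^2-k)}\bigr)=k^{7/4}+O(k^{3/2})$ with no extraneous constant, since the $e^2=O(k^3)$ term is negligible against $4e(k^2-k)\sim 4k^{7/2}$ and $e/2\le\frac{1}{2}k^{3/2}+O(k)$ is absorbed into the error term.
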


For a proof of the upper bound, see \cite{fm}.  Our main result concerns the lower bound 
so we take a moment to sketch a proof.    
Assume that $q$ is a power of a prime.  Let $G(A,B)$ be the incidence graph of a 
projective plane of order $q$ and let $C$ be a set of $q^2 + q + 1$ vertices disjoint from $A \cup B$.  
Make a single vertex in $C$ adjacent to all vertices in $A$ and all vertices in $B$.  
This graph will be 3-partite with $q^2 + q + 1$ vertices in each part.  There will 
be no cycle of length four between any two parts.  The number of triangles in this graph is the number of edges between 
$A$ and $B$ which is $(q + 1)(q^2 + q + 1)$.  
Therefore,
\[
\triangle (q^2 + q + 1) \geq (q + 1)(q^2 + q + 1)
\]
whenever $q$ is a power of a prime.   
Our main result improves this lower bound.    

\begin{theorem}\label{main thm}
If $q$ is a power of an odd prime, then
\[
\triangle (q^6) \geq q^6 ( q^3 - 1)(q + 1).
\]
\end{theorem}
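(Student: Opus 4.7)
Write $Q = q^3$, so each part will have $|V_i| = Q^2 = q^6 = k$, and identify each $V_i$ with $\mathbb{F}_Q \times \mathbb{F}_Q$. My plan is to construct the required 3-partite graph $G$ using a planar polynomial $f \colon \mathbb{F}_Q \to \mathbb{F}_Q$; a natural candidate is the Dembowski--Ostrom polynomial $f(x) = x^{q+1}$. Its planarity on $\mathbb{F}_{q^3}$ for odd $q$ follows from the fact that the additive derivative $D_a f(x) = a x^q + a^q x + a^{q+1}$ has trivial kernel in $\mathbb{F}_Q$: the equation $a x^q + a^q x = 0$ forces $x^{q-1} = -a^{q-1}$, and $-1$ is not a $(q-1)$-th power in $\mathbb{F}_{q^3}^{\ast}$ (since the $(q-1)$-th powers form $\ker N_{\mathbb{F}_{q^3}/\mathbb{F}_q}$, while $N(-1) = (-1)^{q^2+q+1} = -1 \ne 1$ for odd $q$).

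The three bipartite graphs $G_{12}, G_{13}, G_{23}$ are then defined from $f$ by relations of the form $\alpha + \beta = f(a+b)$ (or variants thereof), possibly with asymmetric roles for the three parts. $C_4$-freeness of each $G_{ij}$ is immediate from planarity: for distinct vertices $(a,\alpha), (a',\alpha') \in V_i$, a common neighbour $(b,\beta) \in V_j$ would require $f(a+b) - f(a'+b) = \alpha - \alpha'$, but planarity says the map $b \mapsto f(a+b) - f(a'+b)$ is a permutation of $\mathbb{F}_Q$ whenever $a \ne a'$, so $b$ (and hence $\beta$) is uniquely determined. For the triangle count, eliminating the second coordinates $\alpha, \beta, \gamma$ from the three simultaneous edge relations reduces the problem to counting solutions of a single polynomial equation of degree $q+1$ in one variable $s \in \mathbb{F}_Q$, of the schematic form $s^{q+1} - \alpha s - 2a = 0$. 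The number of triangles through a vertex $(a, \alpha) \in V_1$ then equals $Q$ times the number of $\mathbb{F}_Q$-roots of this polynomial; summing over $(a, \alpha)$ and separating off an exceptional case that accounts for the factor $q^3 - 1$ should yield the target count $q^6(q^3-1)(q+1)$.

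The main obstacle is extracting the factor of $q+1$. The naive symmetric construction -- using the identical relation $\alpha + \beta = f(a+b)$ in all three bipartite graphs -- yields only $Q^3 = q^9$ triangles, because the reduced equation then becomes linear in $s$ with a unique root per parameter choice. To pick up the missing factor $q+1$, the construction must be made asymmetric: for instance with $V_1$ parameterising affine lines (or directions) while $V_2, V_3$ hold points of an affine plane of order $Q$, so that the reduced equation genuinely has degree $q+1$ and on average contributes $q+1$ roots in $\mathbb{F}_Q$. Engineering this asymmetric setup so that all three bipartite subgraphs remain $C_4$-free, and then precisely counting the roots of the reduced equation as the parameters vary over $\mathbb{F}_Q^{\ast} \times \mathbb{F}_Q$, is the principal technical work.
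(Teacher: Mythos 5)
Your overall strategy matches the paper's in outline: a $3$-partite Cayley-type graph on $\mathbb{F}_{q^3}\times\mathbb{F}_{q^3}$ built from the planar monomial $X^{q+1}$, with $C_4$-freeness coming from planarity and the triangle count reducing to counting $\mathbb{F}_{q^3}$-roots of a single degree-$(q+1)$ polynomial in one variable. You also correctly diagnose that the fully symmetric construction yields only about $q^9$ triangles and that the entire problem is to win a factor of $q+1$ from that reduced polynomial. But the proposal stops exactly where the real work begins, and the step you defer as ``the principal technical work'' is the actual content of the theorem.

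Two concrete gaps. First, the asymmetry you gesture at (one part indexing lines or directions, the other two indexing points) is neither carried out nor obviously compatible with $C_4$-freeness of all three bipartite graphs; the paper instead keeps all three parts equal to $\mathbb{F}_{q^3}\times\mathbb{F}_{q^3}$ and achieves the asymmetry by using three \emph{different scalar multiples} of $X^{q+1}$, namely $f(X)=(a-1)X^{q+1}$, $g(X)=\bigl(a\Nm(a^{-1})(\Tr(a)-2a)-1\bigr)X^{q+1}$, and $h(X)=X^{q+1}$ for a suitably chosen $a\in\mathbb{F}_{q^3}\setminus\mathbb{F}_q$; each bipartite graph is then still an affine plane, while the triangle condition $z_1+z_2+z_3=0$, $f(z_1)+g(z_2)+h(z_3)=0$ reduces, after dividing by $z_2^{q+1}$, to $f_a(s)=s^{q+1}+a^{-1}(s^q+s)+\Nm(a^{-1})(\Tr(a)-2a)=0$ in $s=z_1/z_2$. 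Second, and more seriously, your claim that the reduced degree-$(q+1)$ equation ``on average contributes $q+1$ roots in $\mathbb{F}_Q$'' is unsupported and is false for a generic degree-$(q+1)$ polynomial over $\mathbb{F}_Q$, which has on average one root. Obtaining $q+1$ roots means the polynomial splits completely over $\mathbb{F}_{q^3}$ with distinct roots, and proving this is precisely the paper's Theorem 1.4: one checks $-a^{-q}$ is a root, normalises to write $f_a(X-a^{-q})=X\,h(X)$, passes to the reciprocal polynomial of $h$, which is the affine linearized polynomial $L(X)+1$ with $L(X)=(a^{-1}-a^{-q^2})X^q+(a^{-1}-a^{-q})X$, and shows via a trace and counting argument that $L(X)-\alpha$ splits with $q$ distinct roots for every $\alpha\in\mathbb{F}_q$. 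Nothing in your plan supplies this ingredient or a substitute for it, so as written the proposal does not establish the stated bound.
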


By Theorem \ref{main thm} and a standard density of primes argument, we have 
\[
\triangle (k) \geq (1 - o(1) )  k^{5/3} 
\]
as $k \rightarrow \infty$.  

To prove Theorem \ref{main thm}, we will use planar polynomials.
Planar functions were introduced by Dembowski and Ostrom \cite{do} in order to
construct affine planes with certain collineation groups.
Before defining planar polynomials, we introduce some notation.
We write $\ff{q}$ for the finite field with $q$ elements and $\ffs{q}$ for the
nonzero elements of $\ff{q}$.
The norm and trace maps from $\ff{q^3}$ to $\ff{q}$ will be denoted by
$\Nm$ and $\Tr$, respectively; that is, $x \in \ff{q^3}$, 
\begin{center}
$\Nm(x) = x^{1 + q + q^2}$~~ and~~ $\Tr(x) = x + x^q + x^{q^2}$.
\end{center}

Assume now that $q$ is a power of an odd prime.  
A polynomial $f \in \ffx{q}$ is a \emph{planar polynomial} 
if, for each $a \in \ffs{q}$, the map
\[
x \mapsto f(x  + a ) - f(x)
\]
is a bijection on $\ff{q}$.
Such polynomials can be used to construct affine planes  and consequently, they
can also be used to construct bipartite graphs without a cycle of length four.  
The simplest example of a planar polynomial is $f(X)  = X^2$, and this is
the smallest example of a class of planar monomials.
Let $\alpha, e$ be positive integers.
The monomial $f(X) = X^{q^{ \alpha} + 1}$ is planar over $\ff{q^e}$ if and
only if $\frac{e}{ \gcd (\alpha, e) }$ is odd, see \cite{cm}.
To obtain our lower bound, we consider planar monomials whose degree 
increases with $q$, specifically the monomial $X^{q+1}$ over $\ff{q^3}$.
The crucial algebraic ingredient used to prove Theorem \ref{main thm}
is derived from considering our graph construction using these monomials and
may be of independent interest. It reads as follows.

\begin{theorem}\label{polynomial theorem}
Let $q$ be a power of an odd prime.  For any $a \in \ffs{q^3}$, the polynomial 
\[
f_a (X) = X^{q + 1} + a^{-1} ( X^q + X ) + \textup N ( a^{-1} ) ( \textup{Tr}(a) - 2a )
\]
splits completely in $\ffs{q^3}$.  Furthermore, if $a \in \ffs{q}$, then 
$f_a (X)$ has a single root of multiplicity $q+1$, and if 
$a \in \mathbb{F}_{q^3} \backslash \mathbb{F}_q$, then the roots of $f_a (X)$ are all distinct.  
\end{theorem}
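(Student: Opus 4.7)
The key idea is to rewrite $f_a(X)$ so that it factors almost as a product of two linear-like pieces. Observe that
\[
(X+a^{-1})(X^q+a^{-1}) = X^{q+1} + a^{-1}(X^q+X) + a^{-2},
\]
so setting $C = \textup N(a^{-1})(\textup{Tr}(a)-2a)$ we have $f_a(X) = (X+a^{-1})(X^q+a^{-1}) + (C - a^{-2})$. Thus after the substitution $Y = X + a^{-1}$, the equation $f_a(X) = 0$ becomes
\[
Y^{q+1} + bY = D,
\]
where $b = a^{-1} - a^{-q}$ and $D = a^{-2} - C$. The first task is to simplify $D$. I would expand $D$ directly and observe that $b$ has zero trace (since $b + b^q + b^{q^2} = 0$ by telescoping), which gives $b+b^q = -b^{q^2}$ and hence the identity $D = b^{q+1} + b^2 = -b^{1+q^2}$. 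This normal form is much cleaner to work with.

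If $a \in \mathbb{F}_q^\star$, then $b = 0$ and $D = 0$, so the equation reduces to $Y^{q+1} = 0$, giving the single root $Y = 0$ of multiplicity $q+1$, i.e.\ $X = -a^{-1}$ of multiplicity $q+1$, which is nonzero as required.

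Now suppose $a \in \mathbb{F}_{q^3}\setminus\mathbb{F}_q$, so $b\ne 0$ and $D\ne 0$. I would first check separability: the formal derivative of $Y^{q+1} + bY - D$ is $Y^q + b$, and any common root $Y$ satisfies $Y^q = -b$, whence $Y(Y^q+b) = 0 = D$, contradicting $D\ne 0$. So the polynomial is separable, with $q+1$ distinct roots in the algebraic closure. The central step is to show each root lies in $\mathbb{F}_{q^3}$. Given a root $Y$, the relation $Y(Y^q+b) = D$ forces $Y\ne 0$ and $Y^q + b\ne 0$; solving for $Y^q = D/Y - b$ and iterating, I would compute
\[
Y^{q^2} = \frac{-b^{q+q^2}}{Y + b^{q^2}}, \qquad Y^{q^3} = \frac{-b^{q^2+1}}{Y^q + b} = Y,
\]
using $D = -b^{1+q^2}$ and $b^{q^3} = b$ at each stage. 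The minor obstacle here is verifying that the denominators $Y + b^{q^2}$ and $Y^q + b$ are nonzero; but $Y = -b^{q^2}$ would give $Y^q + b = 0$, hence $D = 0$, a contradiction. So all $q+1$ distinct roots lie in $\mathbb{F}_{q^3}$.

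Finally, to confirm the roots are in $\mathbb{F}_{q^3}^\star$, note that $f_a(0) = \textup N(a^{-1})(\textup{Tr}(a) - 2a)$, and this vanishes only if $\textup{Tr}(a) = 2a$. Since $\textup{Tr}(a)\in\mathbb{F}_q$ and $q$ is odd, this forces $a\in\mathbb{F}_q$, which is excluded. Hence $0$ is not a root and the proof is complete. I expect the only delicate part to be the Frobenius chase establishing $Y^{q^3}=Y$, which must be carried out cleanly once the simplification $D=-b^{1+q^2}$ is in hand; everything else is a direct algebraic verification enabled by the factorization trick at the outset.
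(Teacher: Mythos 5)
Your proposal is correct, and it takes a genuinely different route from the paper. The paper first exhibits the explicit root $-a^{-q}$, shifts to write $f_a(X-a^{-q}) = X\,h(X)$, and then passes to the reciprocal polynomial of $h$, which is the affine linearized polynomial $L(X)+1$ with $L(X)=(a^{-1}-a^{-q^2})X^q+(a^{-1}-a^{-q})X$; it identifies $\ker L$ as the $\ff{q}$-line spanned by $a^{-1}-a^{-q^2}$, characterizes $\{x : L(x)\in\ff{q}\}$ as the trace-zero hyperplane, and concludes by a counting argument that $L(X)-\alpha$ splits with $q$ distinct roots for \emph{every} $\alpha\in\ff{q}$. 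You instead complete the product $(X+a^{-1})(X^q+a^{-1})$ to reach the normal form $Y^{q+1}+bY+b^{1+q^2}$ with $b=a^{-1}-a^{-q}$ of trace zero (all the identities you assert, including $D=b^{q+1}+b^2=-b^{1+q^2}$ and the two Frobenius iteration formulas, check out), prove separability from the derivative $Y^q+b$, and certify each root lies in $\ff{q^3}$ by showing $Y^{q^3}=Y$ directly; the nonvanishing of the denominators is handled correctly. Your argument is more elementary and self-contained (no appeal to the theory of linearized polynomials) and yields a clean normal form in which the known root appears as $Y=b$; the paper's argument buys the stronger statement that $L(X)-\alpha$ splits for all $\alpha\in\ff{q}$, which is more information than is needed for the theorem. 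Both treatments of the degenerate case $a\in\ffs{q}$ (a single root $-a^{-1}=-a^{-q}$ of multiplicity $q+1$) and of the nonvanishing of the roots coincide.
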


The graph proving the lower bound in Theorem \ref{main thm} 
is a 3-partite graph with $q^6$ vertices in each part, and the edge density between any two parts will be 
very close to $\frac{1}{q^3}$.
If we treated the edges as if they were placed randomly, we would 
expect roughly $q^{9}$ triangles, however, this graph contains at least $q^{10} -  O (q^9 )$ triangles.  
This is significantly more triangles than one might expect
and yet, the edges between the parts cannot be too unevenly distributed by 
the Expander Mixing Lemma.     

In the next section we prove Theorem \ref{polynomial theorem}.
The graph showing the lower bound of Theorem \ref{main thm} is defined
in Section 3, which also contains the proof of Theorem \ref{main thm}.


\section{Proof of Theorem 1.4}

The edges in the graph that we construct will be defined using the polynomial $X^{q+1} \in \ff{q^3}[X]$.  
Since this polynomial is planar, we will satisfy the condition of having no cycle of length four between two parts as the 
bipartite subgraph between any two parts is an affine plane.  
The difficult part is in counting the triangles.  This is where we require Theorem \ref{polynomial theorem} which we now prove.  

\bigskip

\begin{proof}[Proof of Theorem \ref{polynomial theorem}]
Let $a \in \ffs{q^3}$ and 
\[
f_a (X) = X^{q + 1} + a^{-1} ( X^q + X ) + \textup N ( a^{-1} ) ( \textup{Tr}(a) - 2a ).
\]
We first note that
\begin{align*}
f_a(-a^{-q}) &= a^{-q^2-q} - a^{-q^2-1} - a^{-q-1}
+ a^{-q^2-q-1} (a^{q^2}+a^q-a)\\
&= a^{-q^2-q} - a^{-q^2-1} - a^{-q-1}
+ a^{-q-1} + a^{-q^2-1} - a^{-q^2-q}\\
&=0,
\end{align*}
so that $-a^{-q}$ is a root of $f_a(X)$.
We now normalise $f_a(X)$ with respect to this root.
We have
\begin{align*}
f_a(X-a^{-q})
&= (X^q-a^{-q^2})(X-a^{-q}) + a^{-1} (X^q + X)\\
&\quad - a^{-1} (a^{-q^2} + a^{-q}) + \Nm(a^{-1})(\Tr(a)-2a)\\
&= X^{q+1} + X^q(a^{-1} - a^{-q}) + X (a^{-1}-a^{-q^2}) +f_a(-a^{-q})\\
&= X^{q+1} + X^q(a^{-1} - a^{-q}) + X (a^{-1}-a^{-q^2})\\
&= X \left(X^q + X^{q-1} (a^{-1} - a^{-q}) + (a^{-1}-a^{-q^2})\right).
\end{align*}
Set $h(X)=X^q + X^{q-1} (a^{-1} - a^{-q}) + (a^{-1}-a^{-q^2})$, so that
$f_a(X-a^{-q}) = X\, h(X)$.

The root $-a^{-q}$ will be a multiple root of $f_a(X)$ if and only if 0 is a
root of $h(X)$. This occurs only when $a\in\ffs{q}$, in which case $h(X)=X^q$.
Consequently, $f_a(X)=(X+a^{-q})^{q+1}$, which establishes 
Theorem \ref{polynomial theorem} in the case that $a\in\ffs{q}$.

For the remainder of the proof, assume $a\in\ff{q^3}\setminus\ff{q}$.
We know from the above discussion that $-a^{-q}$ is not a multiple root of
$f_a(X)$. 
The reciprocal polynomial of $h(X)$ is 
\begin{equation*}
X^q h(X^{-1}) = (a^{-1} - a^{-q^2}) X^q + (a^{-1} - a^{-q}) X + 1.
\end{equation*}
Let $L(X)=(a^{-1} - a^{-q^2}) X^q + (a^{-1} - a^{-q}) X$.
The polynomial $L(X)$ is a linearized polynomial, and as it has a non-zero
$X$ term, it has no multiple roots. (For this and many other results on
linearized polynomials, see Lidl and Niederreiter \cite{blidl83}, Chapter 3.)
Indeed, it can be seen from the identity
\begin{align*}
L(X) &= (a^{-1} - a^{-q^2}) X^q + (a^{-1} - a^{-q}) X\\
&= (a^{-1} - a^{-q^2}) X^q - (a^{-1} - a^{-q^2})^q X,
\end{align*}
that $L(X)$ splits completely in $\ff{q^3}$, its roots being given by
$x=\alpha (a^{-1} - a^{-q^2})$, with $\alpha\in\ff{q}$.
Using the additive properties of linearized polynomials, it follows that if
$x_c\in\ff{q^3}$ satisfies $L(x_c)=c$ for some $c\in\ff{q^3}$, then
$L(x_c + \alpha (a^{-1} - a^{-q^2}))=c$ for any $\alpha\in\ff{q}$.
Thus, if $L(X)-c$ has a root in $\ff{q^3}$, then it splits completely over
$\ff{q^3}$ with distinct roots.
Given the relationship between $f_a(X)$, $h(X)$ and $L(X)$, we therefore have
$f_a$(X) splits completely, with distinct roots, over $\ff{q^3}$ if and only if
$L(X)+1$ has a root in $\ff{q^3}$.
We will show something stronger; we shall prove that for any 
$\alpha \in \ff{q}$, $L(X)-\alpha$ splits
completely, with distinct roots, in $\ff{q^3}$.

Fix $\alpha\in\ff{q}$ and suppose $L(x)=\alpha$ holds for some $x\in\ff{q^3}$.
Then $L(x)^q = \alpha$ also.
Hence,
\begin{align*}
0 &= L(x)^q - L(x)\\
&= 
(a^{-q} - a^{-1}) x^{q^2} + (a^{-q} - a^{-q^2}) x^q -
(a^{-1} - a^{-q^2}) x^q - (a^{-1} - a^{-q}) x\\
&= (a^{-q} - a^{-1}) (x^{q^2}+x^q+x)\\
&= (a^{-q} - a^{-1}) \Tr(x),
\end{align*}
and so $\Tr(x)=0$.
This argument can be reversed, proving $L(x)\in\ff{q}$ if and only if
$\Tr(x)=0$.
As there are $q^2$ elements $x\in\ff{q^3}$ for which $\Tr(x)=0$, we know that,
counting multiplicities, $L(x)\in\ff{q}$ for $q^2$ choices of $x$.
However, the degree of $L(X)$ is $q$, and so the polynomial $L(X)-\alpha$ can
have at most $q$ roots for any fixed $\alpha\in\ff{q}$.
Since we have exactly $q$ choices for $\alpha\in\ff{q}$, the polynomial
$L(X)-\alpha$ must have exactly $q$ distinct roots for each $\alpha\in\ff{q}$.
In particular, $L(X)+1$ does, proving that $f_a(X)$ has $q+1$ distinct roots whenever
$a\in\ff{q^3}\setminus\ff{q}$.
\end{proof}


\section{Proof of Theorem 1.3}

We begin this section by defining the graph that implies the lower bound asserted by Theorem \ref{main thm}.    

\bigskip

\textbf{The Construction}: Let $q$ be a power of an odd prime.
Choose $a \in \mathbb{F}_{q^3} \backslash \mathbb{F}_q$ 
so that 
\begin{equation}\label{constraint}
a \textup{N}(a^{-1} ) ( \textup{Tr}(a) - 2a ) - 1 \neq 0
\end{equation}
and $-1$ is not a root of $f_a (X)$.  The equation 
$a \textup{N}(a^{-1} ) ( \textup{Tr} (a) - 2a ) - 1 = 0$ is equivalent to 
\[
( a^{-1} )^{q^2 + q - 1} +(a^{-1} )^{q^2} +  (a^{-1})^q - 1  = 0
\]
so there are at most $q^2+q -1 $ elements of $\mathbb{F}_{q^3} \backslash \mathbb{F}_q$ for which 
(\ref{constraint}) fails.
Similarly, $-1$ is a root of $f_a (X)$ if and only if 
$a^{-1}$ is a root of $X^{q^2 + q} - X^{q^2 + 1} - X^{q + 1} + 2X - 1$. 
Since $q^3 - q - ( q^2 + q)  - ( q^2 + q - 1) \geq 1$ for all $q \geq 3$, such an $a$ exists.  
We also remark that since $a \in \mathbb{F}_{q^3} \backslash \mathbb{F}_q$, 
0 is not a root of $f_a (X)$ as $0 = f_a (0)$ implies that $\textup{Tr}(a)  = 2a$ which, in turn, implies 
$a \in \mathbb{F}_q$.    

Let 
\begin{center}
$f(X) = ( a - 1) X^{q+1} $, $g(X) = ( a \textup{N} (a^{-1} )  (\textup{Tr}(a) - 2a ) - 1) X^{q+1}$, and 
$h(X) = X^{q+1}$.
\end{center}
Each of the polynomials $f(X)$, $g(X)$, and $h(X)$ are nonzero planar polynomials over $\mathbb{F}_{q^3}$.  
Let $A$, $B$, and $C$ be disjoint copies of $\mathbb{F}_{q^3} \times \mathbb{F}_{q^3}$.
Elements in $A$ are denoted by $(x,y)_A$ and the same goes for elements in $B$ and $C$.    
Let $G_{q}(a)$ be the graph whose vertex set is $A \cup B \cup C$, where for all $x,y \in \mathbb{F}_{q^3}$ and 
$z \in \ffs{q^3}$, 
\begin{itemize}
\item $(x,y)_A$ is adjacent to $(x + z , y + f(z) )_B$, 
\item $(x,y)_B$ is adjacent to $(x + z , y + g(z) )_C$, and 
\item $(x,y)_C$ is adjacent to $(x + z , y + h(z) )_A$.
\end{itemize} 

\bigskip

Using Theorem \ref{polynomial theorem}, we now prove the following which implies Theorem \ref{main thm}.  

\begin{theorem}\label{graph thm}
The graph $G_{q}(a)$ is a 3-partite graph with $q^6$ vertices in each part and there is no cycle of length four 
between two parts.  Furthermore, 
the number of triangles in $G_{q} (a)$ is at least $q^6 ( q^3 - 1)(q + 1)$.  
\end{theorem}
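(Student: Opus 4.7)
The plan is to verify the structural claims directly from the construction, then reduce the triangle-counting problem to counting roots of $f_a(X)$, which is handled by Theorem~\ref{polynomial theorem}.

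First, I would observe that $|A|=|B|=|C|=q^6$ and that edges only run between distinct parts, so $G_q(a)$ is $3$-partite with the stated part sizes. To rule out a $C_4$ between, say, $A$ and $B$, I would show that any two distinct vertices of $A$ have at most one common neighbor in $B$. A common neighbor of $(x_1,y_1)_A$ and $(x_2,y_2)_A$ corresponds to $z_1,z_2\in\ffs{q^3}$ with $x_2-x_1=z_1-z_2$ and $y_2-y_1=f(z_1)-f(z_2)$; with $z=x_2-x_1\neq 0$ fixed, the second condition reads $f(z_2+z)-f(z_2)=y_2-y_1$, and since $f(X)=(a-1)X^{q+1}$ is a nonzero scalar multiple of the planar polynomial $X^{q+1}$, the map $z_2\mapsto f(z_2+z)-f(z_2)$ is a bijection on $\ff{q^3}$, so $z_2$ is determined uniquely. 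The same argument works on the other two sides, since $g$ and $h$ are also nonzero scalar multiples of $X^{q+1}$ (nonzero for $g$ by condition~(\ref{constraint})).

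For the triangle count, I would parameterize each triangle by its vertex $(x,y)_A\in A$ and the three edge labels $z_1,z_2,z_3\in\ffs{q^3}$ obtained by traversing $A\to B\to C\to A$; each triangle determines this data uniquely, so no overcounting occurs. Closing the triangle yields $z_1+z_2+z_3=0$ and $f(z_1)+g(z_2)+h(z_3)=0$. Eliminating $z_3$ and expanding $(z_1+z_2)^{q+1}=z_1^{q+1}+z_1^qz_2+z_1z_2^q+z_2^{q+1}$ (using $(-1)^{q+1}=1$ since $q+1$ is even) should collapse the second condition into
\begin{equation*}
az_1^{q+1}+a\,\Nm(a^{-1})(\Tr(a)-2a)\,z_2^{q+1}+z_1^qz_2+z_1z_2^q=0,
\end{equation*}
and then dividing by $az_2^{q+1}$ and setting $w=z_1/z_2$ converts this precisely into the equation $f_a(w)=0$ from Theorem~\ref{polynomial theorem}.

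By Theorem~\ref{polynomial theorem}, $f_a$ has $q+1$ distinct roots in $\ff{q^3}$ (since $a\notin\ff{q}$). The degenerate ratios $w=0$ (which would force $z_1=0$) and $w=-1$ (which would force $z_3=0$) must be excluded, but neither is a root of $f_a$, by the defining choice of $a$: the former fails because $f_a(0)=\Nm(a^{-1})(\Tr(a)-2a)\neq 0$ for $a\notin\ff{q}$, and the latter by the explicit requirement in the construction that $-1$ not be a root of $f_a$. So every one of the $q+1$ roots gives, for each $z_2\in\ffs{q^3}$, a valid pair $(z_1,z_2)=(wz_2,z_2)$, producing $(q+1)(q^3-1)$ pairs in total; multiplying by the $q^6$ choices of starting vertex in $A$ yields exactly $q^6(q+1)(q^3-1)$ triangles. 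The main obstacle I anticipate is the algebraic simplification that reduces the closure condition to $f_a(w)=0$; once that identification is made, Theorem~\ref{polynomial theorem} does all the remaining work.
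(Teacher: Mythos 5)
Your proposal is correct and follows essentially the same route as the paper: the identity you derive by eliminating $z_3$ and dividing by $az_2^{q+1}$ is exactly the paper's chain of equivalences read in reverse, reducing the triangle-closure condition to $f_a(z_1/z_2)=0$ and then invoking Theorem~\ref{polynomial theorem}, with the same exclusions of $w=0$ and $w=-1$ via the choice of $a$. The only cosmetic differences are that you prove the $C_4$-freeness directly from planarity where the paper cites Dembowski--Ostrom, and your bijective parameterization yields the exact triangle count rather than just the lower bound.
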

\begin{proof}
It is clear that $G_{q}(a)$ is 3-partite with $q^6$ vertices in each part.  Since each of $f$, $g$, and $h$ are planar polynomials, 
there is no cycle of length four between any two parts.  This is easily deduced from Lemma 12 of \cite{do}. 
It remains to show that $G_{q}(a)$ has at least $q^6 ( q^3 - 1)(q + 1)$ triangles.  

Let $\xi_1 , \dots , \xi_{q+1}$ be distinct roots in $\ff{q^3}$ of 
\[
X^{q+1} + a^{-1} ( X^q + X ) + \textup{N}(a^{-1}) ( \textup{Tr}(a) - 2a ).
\]
These roots exist by Theorem \ref{polynomial theorem}.  Choose a root $\xi_j$ and 
let $z_2$ be any element of $\ffs{q^3}$.  Define $z_1$ by $z_1 = \xi_j z_2$.  
We then have
\[
a ( z_1 z_2^{-1} )^{q + 1} +  ( z_1 z_2^{-1} )^q + ( z_1 z_2^{-1} ) + a \textup{N}(a^{-1}) ( \textup{Tr}(a) - 2a )=0
\]
which is equivalent to 
\begin{equation*}
(a - 1) (z_1 z_2^{-1} )^{q + 1} + (z_1 z_2^{-1} + 1)^{q+1} + a \textup{N}(a^{-1} ) ( \textup{Tr}(a) - 2a ) - 1 = 0.
\end{equation*}
Since $q+1$ is even and $z_2 \neq 0$, we can rewrite this equation as 
\begin{equation}\label{graph thm eq}
(a - 1)z_1^{q+1} + ( a \textup{N}(a^{-1} ) ( \textup{Tr}(a) - 2a ) - 1) z_2^{q+1} + ( - z_1 - z_2 )^{q+1} = 0.
\end{equation}
If we let $z_3 = -z_1 - z_2$, then from the definition of $f$, $g$, and $h$, we have from (\ref{graph thm eq}) that
\[
f(z_1) + g(z_2) + h(z_3) = 0.
\]
Observe that $z_1$, $z_2$, and $z_3$ are all non-zero since $\xi_j \notin \{ 0 , -1 \} $.  
Thus, for any $(x,y) \in \mathbb{F}_{q^3} \times \mathbb{F}_{q^3}$, the vertices 
\[
(x,y)_A , (x + z_1  , y + f(z_1) )_B , (x + z_1 + z_2 , y + f(z_1) + g(z_2) )_C
\]
form a triangle since 
\[
(x,y)_A = ( x + z_1 + z_2 + z_3 , y + f(z_1) + g(z_2) + h(z_3  ) )_A.
\]
There are $q+1$ choices for $\xi_j$, $q^3 - 1$ choices for $z_2$ (which then determines $z_1$ and $z_3$), 
and $q^6$ choices for $(x,y)$.  Altogether, this gives $q^6 ( q^3  - 1)(q + 1)$ triangles in $G_{q}(a)$
completing the proof of Theorem \ref{graph thm}.
\end{proof}

We make some final remarks.
Constructions using planar monomials and similar to the one used to prove
Theorem \ref{main thm} have appeared elsewhere.  
Allen, Keevash, Sudakov, and Verstra\"{e}te \cite{aksv} use the planar monomial
$X^2$ over $\mathbb{F}_q$ to construct $\{K_3 , K_{2,3} \}$-free graphs with
many edges.    
Other instances include \cite{tv} and \cite{t}, but like \cite{aksv}, these
papers all use $X^2$.
Using the planar monomial $X^2$ in place of $X^{q+1}$ in our construction only
leads to an improvement upon the lower bound of Proposition \ref{fm prop} by a
constant factor of 2.
One of the novelties of our approach is the use of a planar polynomial that is
more complicated than $X^2$.
We are not aware of another instance in extremal graph theory where an existing
result was improved upon by considering planar polynomials other than $X^2$.  
There is one further class of planar monomials known -- the monomial
$X^{(3^\alpha+1)/2}$ is planar over $\ff{3^e}$ if and only if
$\gcd(\alpha,2e)=1$, see \cite{cm}.
Computational evidence suggests replacing $X^{q+1}$ with these polynomials will
not provide an improvement to Theorem \ref{main thm}.


\section{Acknowledgment}

The third listed author would like to thank Jacques Verstra\"{e}te and Jason Williford for helpful discussions.    


\end{document}